\documentclass[11pt]{article}

\usepackage[T1]{fontenc}
\usepackage{lmodern}
\usepackage{amsmath,amssymb,amsthm}
\usepackage[margin=0.9in]{geometry}
\usepackage{cite}
\usepackage[hidelinks]{hyperref}
\numberwithin{equation}{section}

\hypersetup{
  pdftitle={The Wang Transform Inequality},
  pdfauthor={Yair Lavi}
}

\newtheorem{theorem}{Theorem}[section]
\newtheorem{lemma}[theorem]{Lemma}
\title{The Wang Transform Inequality}
\author{Yair Lavi}
\date{24 September 2026}

\begin{document}
\maketitle

\begin{abstract}
We prove the Wang transform inequality in every order: if $n$
is a positive integer, $A$ is a nonnegative $n\times n$ matrix whose row
and column sums equal one, and $J_n$ has every entry $1/n$, then
\[
\operatorname{per}A\ge
\operatorname{per}\!\left(\frac{nJ_n+A}{n+1}\right).
\]
Equality holds exactly when $A=J_n$. We also obtain an explicit positive
quadratic gap for every $n\ge2$.
\end{abstract}

\noindent\textbf{Keywords.} Permanent; doubly stochastic matrix; Wang conjecture; matrix inequality.

\medskip
\noindent\textbf{2020 Mathematics Subject Classification.} 15A15, 05A20, 15B51.

\section{Introduction and main results}

For a positive integer $n$, let $[n]=\{1,\ldots,n\}$ and let $S_n$ be its
permutation group. The \emph{permanent} of an $n\times n$ matrix
$D=(d_{ij})$ is
\begin{equation}\label{eq:permanent}
\operatorname{per}D=\sum_{\pi\in S_n}\prod_{i=1}^n d_{i,\pi(i)}.
\end{equation}
A real matrix is \emph{doubly stochastic} if its entries are nonnegative and
every row and column sum equals one. Write $\Omega_n$ for the set of such
$n\times n$ matrices and $J_n$ for the matrix with every entry $1/n$.
Wang's transform is the map $W_n:\Omega_n\to\Omega_n$ defined by
\begin{equation}\label{eq:transform}
W_n(A)=\frac{nJ_n+A}{n+1}.
\end{equation}

Wang proposed inequality~\eqref{eq:wang}~\cite{wang1977}. Minc included it
in his catalogue of open problems~\cite[Section~8.4]{minc1978} and restated
it as Conjecture~4 in his 1983 survey~\cite[Section~5]{minc1983}.
Cheon and Wanless retained it as Conjecture~4 in their later
update~\cite{cheonwanless2005}. Lih and Wang proved a stronger monotonicity
result in order three~\cite{lihwang1981}. Chang and Foregger independently
proved order four~\cite{cheonwanless2005,chang1988,foregger1988}. Chang
handled matrices outside a neighbourhood of $J_n$~\cite{chang1983}, and
Hwang handled matrices that become a nontrivial block-diagonal direct sum
after row and column permutations~\cite{cheonwanless2005,hwang1989}.

\begin{theorem}\label{thm:wang}
For every positive integer $n$ and every $A\in\Omega_n$, the following
inequality holds, with equality if and only if $A=J_n$:
\begin{equation}\label{eq:wang}
\operatorname{per}A\ge\operatorname{per}W_n(A).
\end{equation}
\end{theorem}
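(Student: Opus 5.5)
We will expand the permanent of $W_n(A)$ along the segment from $J_n$ to $A$. Put $E=A-J_n$, which has all row and column sums zero. Write
\[
p(t)=\operatorname{per}(J_n+tE),\qquad t\in[0,1],
\]
so that $\operatorname{per}A=p(1)$ and $\operatorname{per}W_n(A)=p\bigl(1/(n+1)\bigr)$. Then $p(t)=\sum_{k=0}^n c_k t^k$, where
\[
c_k=n^{-(n-k)}(n-k)!\sum_{|I|=|K|=k}\operatorname{per}E[I,K],
\]
because each entry of $J_n$ equals $1/n$. The coefficient $c_1$ vanishes since the entries of $E$ sum to zero. We also have
\[
\sum_{|I|=|K|=2}\operatorname{per}E[I,K]=\tfrac12\Bigl(\bigl(\textstyle\sum E\bigr)^2-\sum_{i}r_i^2-\sum_j s_j^2+\sum_{ij}e_{ij}^2\Bigr)=\tfrac12\|E\|_F^2,
\]
where $r_i$ and $s_j$ are the row and column sums of $E$, all zero. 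Hence $c_2=\tfrac{(n-2)!}{2n^{n-2}}\|E\|_F^2>0$ whenever $A\ne J_n$. This should be the source of the advertised quadratic gap.

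The inequality to prove is
\[
p(1)-p\bigl(\tfrac1{n+1}\bigr)=\sum_{k\ge2}c_k\Bigl(1-(n+1)^{-k}\Bigr)\ge0.
\]
My plan is not to bound the $c_k$ individually, since they can be negative. Instead I would use convexity along the segment. The key earlier input would be the known fact behind van der Waerden/Egorychev–Falikman, in the form that $J_n$ minimizes the permanent on $\Omega_n$. Applied at every point of the segment, this gives $p(t)\ge p(0)$ for all $t\in[0,1]$, and more strongly $p(t)\ge p(0)$ for $t$ in the larger interval on which $J_n+tE$ stays nonnegative. That alone is not enough, since we need to compare the two values $p(1)$ and $p(1/(n+1))$. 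So I would try to show that $p$ is nondecreasing on $[0,1]$, which is exactly the Lih–Wang type monotonicity.

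The route I would take is the following. Compute
\[
p'(t)=\sum_{i,j}e_{ij}\operatorname{per}M_t(i|j),\qquad M_t=J_n+tE,
\]
where $M_t(i|j)$ is the submatrix obtained by deleting row $i$ and column $j$. Then use the London/Egorychev-type inequality for $M_t$, which is a positive multiple of a doubly stochastic matrix. The statement needed is that $\sum_{ij}(m_{ij}-\tfrac1n)\operatorname{per}M(i|j)\ge0$ for doubly stochastic $M$. This is the inequality "$\operatorname{per}M\ge\tfrac1n\sum_{ij}\operatorname{per}M(i|j)$". Since $e_{ij}=(m_{ij}-1/n)/t$, this gives $p'(t)\ge0$ whenever that inequality holds. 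That inequality is itself the Holens–Đoković-type statement, and it is known to fail in general. So I expect \textbf{this step to be the main obstacle}: monotonicity of $p$ on all of $[0,1]$ is false in general, and only the endpoint comparison is true.

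As a fallback, I would split into two regimes. \emph{Near $J_n$:} write
\[
p(1)-p\bigl(\tfrac1{n+1}\bigr)\ge c_2\Bigl(1-\tfrac1{(n+1)^2}\Bigr)-\sum_{k\ge3}|c_k|,
\]
and bound each $|c_k|$ by $\tfrac{(n-k)!}{n^{n-k}}\binom nk\|E\|^k$ using Hadamard-type permanent bounds on $k\times k$ minors. This wins when $\|E\|_F$ is small. \emph{Away from $J_n$:} here I would invoke Chang's result for matrices outside a neighbourhood of $J_n$, with the neighbourhood made explicit so that it overlaps the first regime. The equality case follows because the gap is strictly positive unless $E=0$.
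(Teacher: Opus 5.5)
There is a genuine gap, and it is in your fallback, which is where all the difficulty lies. Your starting point matches the paper's: $p(t)$ is the centered expansion \eqref{eq:expansion}, and your values $c_1=0$ and $c_2=\frac{(n-2)!}{2n^{n-2}}\|E\|_F^2$ match the zero-line-sum identity \eqref{eq:sigma2}.

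The near-$J_n$ step, with term-by-term bounds $|c_k|=O(\|E\|^k)$, works only on a ball whose radius shrinks with $n$. No sharper estimate of the $|c_k|$ can make that step global. At a permutation matrix $P$ one has $E=P-J_n=PQ$, so $c_k=\frac{(n-k)!}{n^{n-k}}\sigma_k(Q)\ge0$, with equality in the paper's bound \eqref{eq:centered}. Already for $n=3$ your lower bound there is
$c_2(1-4^{-2})-|c_3|(1-4^{-3})=\frac13\cdot\frac{15}{16}-\frac49\cdot\frac{63}{64}=-\frac18$,
while the true gap is $\frac34$.

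So the far regime is indispensable, and for it you offer only the assertion that Chang's theorem can be ``made explicit so that it overlaps the first regime''. Nothing supports this. Chang's result is a fixed statement about matrices outside a certain neighbourhood of $J_n$, and you give no reason that neighbourhood fits inside your shrinking perturbative ball. The conjecture was still listed as open in 2005, long after both Chang's result and the local expansion were available. The missing overlap is the whole theorem.

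The paper never expands at $t=1$, so it needs no case split. It bounds $\operatorname{per}A$ from below by an independent global stability form of van der Waerden, $\operatorname{per}A\ge\gamma_ne^{a_nr^2}$ on all of $\Omega_n$. It uses the expansion only at $t=1/(n+1)$, where every $|\sigma_s(X)|$ with $s\ge2$ is bounded by $\frac{\sigma_s(Q)}{n-1}\|X\|_F^2$. That bound is quadratic in $\|X\|_F$ for every $s$, because $\|X\|_{\rm op}\le1$. The whole tail therefore resums to $(\operatorname{per}W_n(I_n)-\gamma_n)r^2/(n-1)=(C_n-\gamma_n)r^2/(n-1)$, and $b_n<\frac1{2n}<a_n$ finishes the argument.

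Two smaller points about your first route:
\begin{itemize}
\item The derivative criterion is misstated. Since $tp'(t)=n\operatorname{per}M_t-\frac1n\sum_{i,j}\operatorname{per}M_t(i|j)$, what you need is $\operatorname{per}M\ge\frac1{n^2}\sum_{i,j}\operatorname{per}M(i|j)$. Your version already fails at $J_n$.
\item This corrected inequality is the $k=n$ case of the Holens--Djokovi\'c conjecture, and it is equivalent to monotonicity of $p$ for all $A$. To my knowledge it is not known to be false; the known counterexamples to Holens--Djokovi\'c concern $k<n$. So your claim that monotonicity fails in general is unsupported, even though it is not available as a tool either.
\end{itemize}
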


The proof compares a lower bound for $\operatorname{per}A$ with an
upper bound for $\operatorname{per}W_n(A)$.

For a real matrix $X=(x_{ij})$, its Frobenius norm is defined by
$\|X\|_F^2=\sum_{i,j}x_{ij}^2$. Write $r^2=\|A-J_n\|_F^2$, and put
$\gamma_n=n!/n^n$ for every $n\ge1$. For $n\ge2$, define
\begin{equation}\label{eq:constants}
\begin{aligned}
C_n&=\frac{\displaystyle\sum_{k=0}^n n!/k!}{(n+1)^n},&
a_n&=\frac1{2n}+\frac1{3n^2},&
b_n&=\frac{C_n/\gamma_n-1}{n-1}.
\end{aligned}
\end{equation}

\begin{theorem}\label{thm:bounds}
For every integer $n\ge2$ and every $A\in\Omega_n$,
\begin{equation}\label{eq:bounds}
\begin{aligned}
\operatorname{per}A&\ge\gamma_n e^{a_n r^2},&
\operatorname{per}W_n(A)&\le\gamma_n(1+b_n r^2).
\end{aligned}
\end{equation}
\end{theorem}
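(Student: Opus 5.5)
Both inequalities are equalities when $A=J_n$, and the upper bound is also an equality at the vertices of $\Omega_n$. The plan is to write $A=J_n+X$, where $X$ has zero row and column sums and $\|X\|_F^2=r^2$, and treat the two bounds separately.

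\textbf{Upper bound.} Here $W_n(A)=J_n+tX$ with $t=1/(n+1)$. I would expand by multilinearity of the permanent:
\[
\operatorname{per}(J_n+tX)=\sum_{k=0}^n t^k\,\frac{(n-k)!}{n^{n-k}}\sum_{|I|=|K|=k}\operatorname{per}X[I,K],
\]
where $X[I,K]$ is the submatrix on rows $I$ and columns $K$. The $k=0$ term is $\gamma_n$, and the $k=1$ term vanishes because $\sum_{i,j}x_{ij}=0$. So $\operatorname{per}W_n(A)-\gamma_n$ is a sum of homogeneous forms of degree $k\ge2$ in $X$.

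Next I would record the calibration. For a permutation matrix $P$, $\|P-J_n\|_F^2=n-1$, and a direct count of permutations by fixed points of $P^{-1}\pi$ gives $\operatorname{per}W_n(P)=C_n$. Hence $\gamma_n(1+b_n(n-1))=C_n$, so the claimed bound is sharp at every vertex of $\Omega_n$.

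This suggests studying
\[
g(A)=\operatorname{per}W_n(A)-\gamma_n-\gamma_n b_n\|A-J_n\|_F^2 .
\]
It vanishes at the permutation matrices and is nonpositive near $J_n$, since its quadratic part is small thanks to the factor $t^2$. I would try to show $g\le0$ on $\Omega_n$ by a Birkhoff decomposition $A=\sum_\sigma\lambda_\sigma P_\sigma$. Substituting into the multilinear expansion, I would bound each degree-$k$ form $c_k(X)$ by the same form evaluated at a vertex, scaled by $r^2/(n-1)$. This should use $|x_{ij}|\le1$ together with the decomposition
\[
\|X\|_F^2=\sum_\sigma\lambda_\sigma\langle P_\sigma-J_n,\,X\rangle .
\]
If this termwise comparison fails, the fallback is crude majorization. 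Bound $\sum_{|I|=|K|=k}|\operatorname{per}X[I,K]|$ by $\binom nk k!\,(r^2/k)^{k/2}$-type estimates, using the AM--GM inequality on rows. Then use that $t=1/(n+1)$ makes the tail geometric. Finally check that the total is at most the vertex value, which is where the specific constant $b_n$ enters.

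\textbf{Lower bound.} I would follow Gurvits' proof of van der Waerden's bound. Let $p(x)=\prod_i(Ax)_i$; this is real stable with capacity $1$. Set $q_k=\partial_{x_{k+1}}\cdots\partial_{x_n}p|_{x_{k+1}=\dots=0}$. Gurvits' lemma gives
\[
\operatorname{cap}(q_{k-1})\ge\Bigl(\tfrac{d-1}{d}\Bigr)^{d-1}\operatorname{cap}(q_k),
\]
where $d$ is the degree of $q_k$ in $x_k$. I would seek a quantitative version with an extra factor $e^{c\,\delta_k}$, where $\delta_k$ measures how far $q_k$ is from being a pure power of a form with equal coefficients, that is, how far column $k$ is from uniform. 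The goal is to show $\sum_k\delta_k\ge(1/(2n)+1/(3n^2))\,r^2$.

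As a check, near $J_n$ a second-order expansion of $\log\operatorname{per}(J_n+X)$ should give the coefficient $\tfrac{1}{2n}$ and more. A cleaner alternative is to exploit the convexity of the map $X\mapsto\log\operatorname{per}(J_n+X)$ along segments from $J_n$ in the appropriate directions, if this holds. Then the local quadratic growth would propagate, since $e^{a_n r^2}$ is log-linear in $r^2$.

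\textbf{Main obstacle.} I expect the main obstacle to be the lower bound. The van der Waerden inequality is tight only at $J_n$, and extracting a global quadratic gain with the explicit constant $a_n$ requires a stability version of the capacity inequality valid uniformly over $\Omega_n$, not just locally. I would first try to establish the stability factor in the univariate step: for a real-rooted polynomial of degree $d$ with nonnegative coefficients, bound the loss in the derivative-at-zero versus capacity ratio by the variance of the roots.
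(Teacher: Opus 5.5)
Your upper-bound outline matches the paper's: the centered expansion~\eqref{eq:expansion} at $t=1/(n+1)$, the vanishing $s=1$ term, and the calibration $\operatorname{per}W_n(P)=C_n$ with $\|P-J_n\|_F^2=n-1$. The gap is that the whole content of the bound is the termwise comparison you only hope for: $|\sigma_s(X)|\le\sigma_s(Q)\|X\|_F^2/(n-1)$ for $2\le s\le n$, where $Q=I_n-J_n$. Once you have it, summing with the positive weights $\frac{(n-s)!}{n^{n-s}}t^s$ immediately gives $\operatorname{per}W_n(A)-\gamma_n\le(C_n-\gamma_n)r^2/(n-1)$. This comparison is exactly Lemma~\ref{lem:centered}, which the paper imports from the author's earlier work rather than proving. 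Its hypothesis is $\|X\|_{\rm op}\le1$, which holds because $X=AQ$ and $\|A\|_{\rm op}=\|Q\|_{\rm op}=1$. Nothing in your sketch shows how $|x_{ij}|\le1$ plus a Birkhoff decomposition would bound a form of degree $s\ge3$ linearly in $r^2$. Your fallback cannot succeed. At a permutation matrix ($r^2=n-1$) the target equals $\gamma_n+\sum_{s\ge2}\frac{(n-s)!}{n^{n-s}}t^s\sigma_s(Q)$ exactly, with no slack. Any majorant depending only on $r^2$ must dominate $\sigma_s(Q)\ge0$ there, so each majorant would have to be exact at $X=Q$. Majorants that ignore the sign cancellations inside the permanents overshoot. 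For $n=s=3$, $\operatorname{per}(I_3-J_3)=4/9$, while $\operatorname{per}|I_3-J_3|=16/27$. A bound of your shape $\binom ns s!\,(r^2/s)^{s/2}$ gives $3!\,(2/3)^{3/2}\approx3.27$.

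The lower bound is a larger gap: what you describe is a research programme, not a proof. You establish neither a stability version of Gurvits' inequality nor the estimate $\sum_k\delta_k\ge a_nr^2$, and the convexity shortcut does not work. Along a ray, $\phi(s)=\log\operatorname{per}(J_n+sX)$ has $\phi'(0)=0$. So convexity in $s$ alone yields only $\phi(1)\ge\phi(0)$, which is van der Waerden's bound with no quadratic gain. The propagation you describe would need convexity in $r^2$, and that is false. For $n=2$, $\operatorname{per}A=\gamma_2(1+r^2)$ exactly, so $\log(\operatorname{per}A/\gamma_2)=\log(1+r^2)$ is strictly concave in $r^2$. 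In general the local slope at $J_n$ is $n/(2(n-1))$, well above $a_n$. The local analysis is therefore easy; the real difficulty is a global estimate with a smaller constant, uniform up to the boundary of $\Omega_n$. The paper does not prove this half here either; it cites it as Theorem~1.2 of the author's earlier paper. As written, your proposal establishes neither inequality in~\eqref{eq:bounds}.
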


We will show that $0<b_n<1/(2n)<a_n$ for every $n\ge2$.
The bounds in~\eqref{eq:bounds} then give the global gap
\begin{equation}\label{eq:gap}
\begin{aligned}
\operatorname{per}A-\operatorname{per}W_n(A)
&\ge\gamma_n\bigl(e^{a_n r^2}-1-b_n r^2\bigr)\\
&\ge\gamma_n(a_n-b_n)r^2.
\end{aligned}
\end{equation}
The positive coefficient in the last bound is explicit; no optimality
is claimed. We proved the lower estimate and the centered subpermanent
bound used here in~\cite{lavi2026marcus}. Section~2 states these inputs
precisely. Section~3 derives the Wang-specific upper estimate, and
Section~4 compares the constants and completes the proof.

\section{Input estimates}

We proved the lower estimate in~\eqref{eq:bounds} for every $n\ge2$ and every
$A\in\Omega_n$ in~\cite[Theorem~1.2, first inequality]{lavi2026marcus},
including matrices on the boundary. We use that result without repeating
its proof.

For a real $n\times n$ matrix $X$ and row and column sets $R,S$ of equal
size, write $X[R,S]$ for the corresponding submatrix. Let
\[
\sigma_s(X)=\sum_{\substack{R,S\subset[n]\\|R|=|S|=s}}
\operatorname{per}X[R,S],\qquad \sigma_0(X)=1.
\]
A matrix has \emph{zero margins} if every row and column sum is zero.
Let $I_n$ be the identity matrix, let $Q=I_n-J_n$, and set
$\|X\|_{\rm op}=\sup_{v\ne0}\|Xv\|_2/\|v\|_2$.

\begin{lemma}[Centered subpermanents; \cite{lavi2026marcus}, Lemma~3.1]
\label{lem:centered}
For every integer $n\ge2$, every real $n\times n$ matrix $X$ with zero
margins and $\|X\|_{\rm op}\le1$, and every $2\le s\le n$,
\begin{equation}\label{eq:centered}
|\sigma_s(X)|\le
\frac{\sigma_s(Q)}{n-1}\|X\|_F^2.
\end{equation}
\end{lemma}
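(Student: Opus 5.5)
The plan is to expand $\sigma_s(X)$ as a sum over index tuples, remove the distinctness constraints by Möbius inversion, and then use the zero margins to kill most of the resulting terms. The comparison matrix is $Q$ itself. It has zero margins, satisfies $\|Q\|_{\rm op}=1$, and has $\|Q\|_F^2=n-1$, so \eqref{eq:centered} is an equality at $X=Q$. The proof should therefore extract from $\sigma_s(X)$ a factor $\|X\|_F^2$ while bounding everything else by the corresponding quantity for $Q$.

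First I rewrite the subpermanent sum as
\[
\sigma_s(X)=\frac1{s!}\sum_{\substack{i_1,\dots,i_s\ \text{distinct}\\ j_1,\dots,j_s\ \text{distinct}}}\prod_{k=1}^s x_{i_kj_k}.
\]
I then remove the distinctness constraints on rows and on columns by Möbius inversion over the partition lattice $\Pi_s\times\Pi_s$. A pair of partitions $(\alpha,\beta)$ determines a bipartite multigraph $G_{\alpha\beta}$. Its row-vertices are the blocks of $\alpha$, its column-vertices are the blocks of $\beta$, and it has one edge for each $k$. The unrestricted sum attached to $(\alpha,\beta)$ is the tensor contraction $T_{G}(X)=\sum\prod_{e}x_{i(e)j(e)}$, which factorizes over the connected components of $G$. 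Because $X$ has zero margins, $T_G(X)=0$ whenever some vertex of $G$ has degree one, since summing that free index kills the term. Hence
\[
\sigma_s(X)=\frac1{s!}\sum_{(\alpha,\beta)}\mu(\alpha)\mu(\beta)\,T_{G_{\alpha\beta}}(X),
\]
where only graphs of minimum degree $\ge2$ survive.

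Next I bound each surviving connected contraction. A connected bipartite multigraph of minimum degree $\ge2$ can be evaluated by choosing a spanning structure and contracting along it. This writes $T_G(X)$ as an inner product $\langle X, M\rangle_F$, where $M$ is built from products of copies of $X$, $X^{\mathsf T}$ and Hadamard/diagonal operations. Using $\|X\|_{\rm op}\le1$ and the fact that entries satisfy $|x_{ij}|\le1$, I aim to show $|T_G(X)|\le\|X\|_F^2$ for every connected such $G$. The model case is a cycle, where $T_G=\operatorname{tr}\bigl((XX^{\mathsf T})^k\bigr)\le\|X\|_F^2\|X\|_{\rm op}^{2k-2}$. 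For $c$ components I use $\|X\|_F^2\le(n-1)\|X\|_{\rm op}^2\le n-1$, which holds because $X$ annihilates the all-ones vector and so has rank at most $n-1$. This gives $|T_G(X)|\le(n-1)^{c-1}\|X\|_F^2$, and this is exactly the value $T_G(Q)$ would take if every connected contraction of $Q$ equalled $\|Q\|_F^2=n-1$.

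The main obstacle is the final matching of constants. Taking absolute values term by term loses the signs $\mu(\alpha)\mu(\beta)$, so it is not automatic that the resulting sum equals $\sigma_s(Q)/(n-1)$. What I would try first is to organize the sum so that the bound appears as a monotone comparison. Concretely, I would show that after grouping partitions by the isomorphism type of $G$, the signed coefficient of each type has a fixed sign pattern that agrees with the sign of $T_G(Q)$. With that in hand, replacing $T_G(X)/\|X\|_F^2$ by its extreme value $T_G(Q)/(n-1)$ increases every term in the same direction. If this sign bookkeeping fails, my fallback is an extremal argument. The ratio $|\sigma_s(X)|/\|X\|_F^2$ is to be maximized over the compact set $\{X:\text{zero margins},\ \|X\|_{\rm op}\le1\}$. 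Since $\sigma_s$ is homogeneous of degree $s\ge2$, scaling pushes a maximizer to $\|X\|_{\rm op}=1$. A Lagrange/symmetrization argument, averaging over $S_n\times S_n$ acting by row and column permutations, would then be used to show that the maximizer can be taken to be $Q$.
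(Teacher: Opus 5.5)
Your first two steps are sound: the $1/s!$ formula, Möbius inversion over $\Pi_s\times\Pi_s$, and the vanishing of every contraction with a degree-one vertex are all correct. The gap is the step you flag as the main obstacle, and neither proposed resolution can close it. The paper only cites this lemma, so this review judges your plan on its own terms.

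First, your per-graph estimate is not sharp at $Q$. Connected contractions of $Q$ do not all equal $n-1$, so the premise behind ``exactly the value $T_G(Q)$ would take'' is false. Already for $s=3$ the only surviving pair is $(\hat1,\hat1)$, and there are no signs to track. Your expansion gives $\sigma_3(X)=\frac23\sum_{i,j}x_{ij}^3$, and $\sum_{i,j}q_{ij}^3=(n-1)(n-2)/n$. So the lemma for $s=3$ says $|\sum_{i,j}x_{ij}^3|\le\frac{n-2}{n}\|X\|_F^2$. Your bound $|T_G(X)|\le\|X\|_F^2$ gives the constant $1$, and even $|x_{ij}|\le(n-1)/n$ only gives $(n-1)/n$. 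Absolute values inside the contraction cannot work either, because $\sum_{i,j}|q_{ij}|^3>\frac{n-2}{n}\|Q\|_F^2$. The cancellation coming from zero line sums has to be used.

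Second, the sign-pattern hope fails at $s=4$. The pair $(\hat1,\{\{1,2\},\{3,4\}\})$ has coefficient $\mu\mu=-6$, and its contraction $\sum_i(\sum_j x_{ij}^2)^2$ is positive at $Q$. In fact $24\,\sigma_4(Q)/(n-1)=3n+3-36(2n-3)/n^2$. The sum of the absolute values of the same terms is larger by $72(n-1)/n$. So even per-graph bounds that were sharp at $Q$ could not be added up to the constant $\sigma_s(Q)/(n-1)$.

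The fallback does not supply the argument either. The map $X\mapsto|\sigma_s(X)|/\|X\|_F^2$ is not convex, so averaging over an orbit does not increase it; indeed the $S_n\times S_n$-average of any zero-margin matrix is $0$. The maximizers also include every $PQP'$ with $P,P'$ permutation matrices, so identifying them is the entire difficulty.

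What is missing is a global argument that never expands the distinctness constraint. Here is one that works.
\begin{enumerate}
\item Let $D_s\in(\mathbb R^n)^{\otimes s}$ be the indicator of tuples with distinct entries, so that $\sigma_s(X)=\frac1{s!}\langle D_s,X^{\otimes s}D_s\rangle$.
\item Since $X=QXQ$, this equals $\frac1{s!}\langle w,X^{\otimes s}w\rangle$ with $w=Q^{\otimes s}D_s$. In particular $\sigma_s(Q)=\|w\|^2/s!$.
\item The tensor $w$ is symmetric in its slots, invariant under simultaneous permutation of $[n]$, and orthogonal to $\mathbf 1$ in each slot. Hence its one-slot marginal commutes with all permutation matrices and annihilates $\mathbf 1$, so it equals $\frac{\|w\|^2}{n-1}Q$.
\item Consequently $\|(A\otimes I^{\otimes(s-1)})w\|^2=\frac{\|w\|^2}{n-1}\operatorname{tr}(AQA^{\mathsf T})$, and the same holds in any slot.
\item Factor $X^{\otimes s}=(X\otimes I\otimes I)(I\otimes I\otimes X^{\otimes(s-2)})(I\otimes X\otimes I)$; the middle factor is absent when $s=2$.
\item Move the first factor across the inner product, apply Cauchy--Schwarz, and use $\|X\|_{\rm op}\le1$ on the middle factor.
\item Because $QX=XQ=X$, both remaining norms equal $\frac{\|w\|}{\sqrt{n-1}}\|X\|_F$.
\end{enumerate}
This yields $|\sigma_s(X)|\le\frac{\sigma_s(Q)}{n-1}\|X\|_F^2$ for every $2\le s\le n$.
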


We proved this estimate in~\cite[Lemma~3.1]{lavi2026marcus}. Its proof
also establishes $\sigma_s(Q)\ge0$ for $2\le s\le n$
\cite[equation~(3.5)]{lavi2026marcus}. As verified just after that lemma
in~\cite{lavi2026marcus}, if $A\in\Omega_n$ then $X=A-J_n$ has zero
margins and $\|X\|_{\rm op}\le1$, so~\eqref{eq:centered} applies.

\section{The transformed upper bound}

Let $X=A-J_n$. In~\cite[equation~(4.1)]{lavi2026marcus} we proved the
\emph{centered expansion}, valid for every real $t$:
\begin{equation}\label{eq:expansion}
\operatorname{per}(J_n+tX)
=\sum_{s=0}^n\frac{(n-s)!}{n^{n-s}}
t^s\sigma_s(X).
\end{equation}
Here $\sigma_1(X)=0$ because the entries of $X$ sum to zero, and
the $s=0$ term equals $\gamma_n$. Substitute $t=1/(n+1)$
in~\eqref{eq:expansion}, apply Lemma~\ref{lem:centered} term by term,
and use $r^2=\|X\|_F^2$:
\begin{equation}\label{eq:upper-sum}
\operatorname{per}W_n(A)
\le\gamma_n+\frac{r^2}{n-1}
\sum_{s=2}^n\frac{(n-s)!}{n^{n-s}(n+1)^s}\sigma_s(Q).
\end{equation}
The sum is exactly $\operatorname{per}W_n(I_n)-\gamma_n$
by~\eqref{eq:expansion}, since $I_n-J_n=Q$ and $\sigma_1(Q)=0$.
Let $\mathbf1=(1,\ldots,1)^{\mathsf T}$. Then
\begin{equation}\label{eq:identity-permanent}
\begin{aligned}
W_n(I_n)&=\frac{\mathbf1\mathbf1^{\mathsf T}+I_n}{n+1},\\
\operatorname{per}(\mathbf1\mathbf1^{\mathsf T}+I_n)
&=\sum_{k=0}^n\binom nk(n-k)!
=\sum_{k=0}^n\frac{n!}{k!}.
\end{aligned}
\end{equation}
The first sum counts the terms obtained by choosing identity entries
in a subset of positions before taking the permanent of the remaining
all-ones matrix. Hence $\operatorname{per}W_n(I_n)=C_n$, and
\eqref{eq:upper-sum} becomes
\begin{equation}\label{eq:upper}
\operatorname{per}W_n(A)
\le\gamma_n+\frac{C_n-\gamma_n}{n-1}r^2
=\gamma_n(1+b_nr^2),
\end{equation}
proving the second half of Theorem~\ref{thm:bounds}.

\section{The constant comparison and final proof}

\subsection{Comparison of the constants}

It remains to show that the two coefficients in~\eqref{eq:bounds} separate
strictly. The sum in~\eqref{eq:identity-permanent} is the integer
$\lfloor e n!\rfloor$: the omitted tail of
$e n!=\sum_{k=0}^{\infty}n!/k!$ is strictly between zero and one. Indeed,
for $k\ge n+1$ its first term is $1/(n+1)$ and each later term is at most
$1/(n+1)$ times its predecessor. Thus
\begin{equation}\label{eq:en-factorial}
C_n=\frac{\lfloor e n!\rfloor}{(n+1)^n}
\quad\text{and}\quad
\frac{C_n}{\gamma_n}
<e\left(\frac n{n+1}\right)^n.
\end{equation}
For $x>0$, the elementary logarithm inequalities
$\log(1+x)>x-x^2/2$ and $\log(1+x)>x/(1+x)$ give
\begin{equation}\label{eq:logarithms}
1-n\log(1+1/n)<\frac1{2n}
<\log\left(1+\frac1{2(n-1)}\right).
\end{equation}
For the second comparison, apply the second inequality at
$x=1/[2(n-1)]$ and observe that
$x/(1+x)=1/(2n-1)>1/(2n)$. Taking exponentials
in~\eqref{eq:en-factorial}--\eqref{eq:logarithms} shows
\begin{equation}\label{eq:b-bound}
(n-1)\left(\frac{C_n}{\gamma_n}-1\right)<\frac12.
\end{equation}
This estimates $b_n$ from above. For its positivity, we proved
$\sigma_s(Q)\ge0$ for $2\le s\le n$ in
\cite[equation~(3.5)]{lavi2026marcus}. Also, by the zero-line-sum identity
$\sigma_2(Y)=\|Y\|_F^2/2$, valid for every real matrix $Y$ with zero line
sums,
\begin{equation}\label{eq:sigma2}
\sigma_2(Q)=\frac{\|Q\|_F^2}{2}=\frac{n-1}{2}>0.
\end{equation}
To see the identity, expand $\sigma_2(Y)$ and sum over two
distinct rows and columns; after fixing one entry $y_{ij}$,
the sum of entries in the complementary rows and columns is
$y_{ij}$ because all line sums vanish. Therefore the sum in
\eqref{eq:upper-sum} with $Q$ is strictly positive, so $C_n>\gamma_n$ and
$b_n>0$.

If $n\ge3$, \eqref{eq:b-bound} and $(n-1)^2>n$ give
\begin{equation}\label{eq:constant-comparison}
b_n<\frac1{2(n-1)^2}<\frac1{2n}<a_n.
\end{equation}
For $n=2$, direct substitution gives $b_2=1/9<1/4<a_2=1/3$.
Thus $0<b_n<1/(2n)<a_n$ for every $n\ge2$.

The strict comparison makes equality analysis of the two individual
estimates unnecessary: their combination separates the permanents
whenever $A\ne J_n$.

\subsection{Completion of the proof}

\begin{proof}[Proof of Theorem~\ref{thm:wang}]
Let $n\ge2$. Theorem~\ref{thm:bounds} and $e^y\ge1+y$ give
\[
\begin{aligned}
\operatorname{per}A-\operatorname{per}W_n(A)
&\ge\gamma_n\bigl(e^{a_nr^2}-1-b_nr^2\bigr)\\
&\ge\gamma_n(a_n-b_n)r^2,
\end{aligned}
\]
which proves~\eqref{eq:gap}. The comparison above gives $a_n>b_n$,
so the last expression is strictly positive when $A\ne J_n$. At
$A=J_n$, both permanents equal $\gamma_n$. For $n=1$, the only doubly
stochastic matrix is $A=J_1=[1]$; it is fixed by $W_1$, and both
permanents equal one. This proves~\eqref{eq:wang} and its equality claim.
\end{proof}

\section*{Acknowledgments}

The proof of this conjecture was carried out by GPT-5.6-sol, GPT-6 Astra,
and Claude Fable 5, under the guidance of the author. The author has reviewed the
resulting proof arguments. Responsibility for the final text rests with the
author.

\end{document}